\newtheorem{lemma}{Lemma}
\newtheorem{theorem}{Theorem}
\newtheorem{rem}{Remark}
\newtheorem{exmp}{Example}
\DeclareMathOperator{\GL}{GL}
\newcommand{\cG}{{\mathcal G}}
\newcommand{\cL}{{\mathcal L}}
\newcommand{\cM}{{\mathcal M}}
\newcommand{\cP}{{\mathcal P}}
\newcommand{\cX}{{\mathcal X}}
\newcommand{\dtimes}{\mathbin{\dot\times}}
\newcommand{\inz}{\mathrel{\mathrm{I}}}
\newcommand{\C}{C-{\hspace{0pt}}} 
\newcommand{\A}{A-{\hspace{0pt}}} 
\newenvironment{myproof}[1]%
    {\begin{trivlist} \item \emph{Proof of Theorem\/ }#1.}
    {{}\hfill $\square$\end{trivlist}}
\begin{document}

\title{Transformations on the product of Grassmann spaces}

\author{Hans Havlicek \and Mark Pankov}

\maketitle

\section{Introduction}\label{sect:intro}

Let $\cG_k$ denote the set of all $k$-dimensional subspaces of an
$n$-dimensional vector space. We recall that two elements of $\cG_k$ are called
\emph{adjacent} if their intersection has dimension $k-1$. The set $\cG_k$ is
point set of a partial linear space, namely a \emph{Grassmann space} for $1< k<
n-1$ (see Section \ref{sect:thm:GxG}) and a projective space for
$k\in\{1,n-1\}$. Two adjacent subspaces are---in the language of partial linear
spaces---two distinct collinear points.

W.L.~Chow \cite{Chow} determined all bijections of $\cG_k$ that preserve
adjacency in both directions in the year 1949. In this paper we call such a
mapping, for short, an \emph{{\A}transformation}. Disregarding the trivial
cases $k=1$ and $k=n-1$, every {\A}transformation of $\cG_k$ is induced by a
semilinear transformation $V\to V$ or (only when $k=2n$) by a semilinear
transformation of $V$ onto its dual space $V^*$. There is a wealth of related
results, and we refer to \cite{benz-92}, \cite{huang-98}, and \cite{wan-96} for
further references.

In the present note, we aim at generalizing Chow's result to products of
Grassmann spaces. However, we consider only products of the form $\cG_k\times
\cG_{n-k}$, where $\cG_k$ and $\cG_{n-k}$ stem from the same vector space $V$.
Furthermore, for a fixed $k$ we restrict our attention to a certain subset of
$\cG_k\times \cG_{n-k}$. This subset, say $\cG$, is formed by all pairs of
\emph{complementary\/} subspaces. Our definition of an adjacency on $\cG$ in
formula (\ref{eq:G.adjacent}) is motivated by the definition of lines in a
product of partial linear spaces; cf.\ e.g.\ \cite{NP}.

One of our main results (Theorem~\ref{thm:A}) states that Chow's theorem
remains true, mutatis mutandis, for the {\A}transformations of $\cG$. However,
in Theorem~\ref{thm:C} we can show even more: Let us say that two elements
$(S,U)$ and $(S',U')$ of $\cG$ are \emph{close\/} to each other, if their
Hamming distance is $1$ or, said differently, if they coincide in precisely one
of their components. Then the bijections of $\cG$ onto itself which preserve
this closeness relation in both directions---we call them
\emph{{\C}transformations\/} of $\cG$---are precisely the {\A}transformations
of $\cG$. In this way, we obtain for $1<k<n-1$ two characterizations of the
semilinear bijections $V\to V$ and $V\to V^*$ via their action on the set
$\cG$.

Finally, we turn to the following question: What happens to our results if we
replace the set $\cG$ with the entire cartesian product $\cG_k\times
\cG_{n-k}$? Clearly, the basic notions of adjacency and closeness remain
meaningful. We describe all {\C}transformations of $\cG_k\times \cG_{n-k}$ in
Theorem~\ref{thm:CGxG}. However, in sharp contrast to Theorem~\ref{thm:C}, this
is a rather trivial task, and the transformations of this kind do not deserve
any interest. Then, using a result of A.~Naumowicz and K.~Pra\.zmowski
\cite{NP}, we also determine all {\A}transformations of $\cG_k\times \cG_{n-k}$
in Theorem~\ref{thm:AGxG}. Such mappings are closely related with collineations
of the underlying partial linear space, and in general they can be described in
terms of \emph{two\/} semilinear bijections, but not in terms of a
\emph{single\/} semilinear bijection.

Before we close this section, it is worthwhile to mention that the results from
\cite{NP} could be used to describe the {\A}transformations of arbitrary finite
products of Grassmann spaces, but this is not the topic of the present article.

\section{{\A}transformations and {\C}transformations}\label{sect:trafos}

First, we collect our basic assumptions and definitions. Throughout this paper,
let $V$ be a $n$-dimensional left vector space over a division ring, $2\leq
n<\infty$. Suppose that $P,T\subset V$ are subspaces. They are said to be
\emph{incident} (in symbols: $P\inz T$) if $P\subset T$ or if $T\subset P $.
Note that according to this definition every subspace of $V$ is incident with
$0$ (the zero subspace) and with $V$. Furthermore, we define
\begin{equation}\label{eq:adjacent}
   P\sim T \;:\Leftrightarrow\; \dim P = \dim T = \dim (P\cap T) +1,
\end{equation}
where ``$\sim$'' is to be read as \emph{adjacent}.

We put ${\mathcal G}_{i}$, for the set $i$-dimensional subspaces of $V$,
$i=0,1,\dots, n$. In what follows \emph{we fix a natural number\/}
$k\in\{1,2,\ldots,n-1\}$ and adopt the notation
\begin{equation}\label{eq:G}
   \cG:=\{(S,U)\in{\mathcal G}_{k}\times {\mathcal G}_{n-k}\mid S+U=V\}.
\end{equation}
Hence $(S,U)\in\cG$ means that $S$ and $U$ are \emph{complementary\/}
subspaces. On the set $\cG$ we define two binary relations: Elements $(S,U)$
and $(S',U')$ of ${\mathcal G}$ are said to be \emph{adjacent\/} if
\begin{equation}\label{eq:G.adjacent}
   ( S=S'\mbox{ and } U\sim U')\mbox{ or }( S\sim S'\mbox{ and } U=U').
\end{equation}
By abuse of notation, this relation on $\cG$ will also be denoted by the symbol
``$\sim$''. Our elements are said to be \emph{close\/} to each other (in
symbols: $(S,U)\approx (S',U')$) if
\begin{equation}\label{eq:G.close}
   ( S=S'\mbox{ and } U\neq U')\mbox{ or }( S\neq S'\mbox{ and } U=U').
\end{equation}
According to this definition, any two adjacent elements of ${\mathcal G}$ are
close; the converse holds only for $k=1$ and $k=n-1$.

We shall establish in Lemma~\ref{lemma:connect} that any two elements $(S,U)$
and $(S',U')$ of ${\mathcal G}$ can be connected by a finite sequence
\begin{equation}\label{eq:A.connect}
  (S,U)=(S_{0},U_{0})\sim (S_{1},U_{1})\sim\cdots\sim (S_{i},U_{i})=(S',U').
\end{equation}
Consequently, we also have
\begin{equation}\label{eq:C.connect}
 (S,U)=(S_{0},U_{0})\approx (S_{1},U_{1})\approx\cdots\approx
 (S_{i},U_{i})=(S',U').
\end{equation}
We refer to the definition of a \emph{Pl\"ucker space\/} in
\cite[p.~199]{benz-92}, and we point out the (inessential) difference that our
relations $\sim$ and $\approx$ are anti-reflexive.

A bijection $f:{\mathcal G}\to {\mathcal G}$ is said to be an \emph{adjacency
preserving transformation} (shortly: an \emph{{\A}transformation}) if $f$ and
$f^{-1}$ transfer adjacent elements of $\cG$ to adjacent elements; if $f$ and
$f^{-1}$ map close elements of $\cG$ to close elements then we say that $f$ is
a \emph{closeness preserving transformation\/} (shortly: a
\emph{{\C}transformation}).

\begin{exmp}\label{exmp:1}{\rm
For any two mappings $f':{\mathcal G}_{k}\to {\mathcal G}_{k}$ and
$f'':{\mathcal G}_{n-k}\to {\mathcal G}_{n-k}$ we put
\begin{equation}
 f'\times f'': {\mathcal G}_{k}\times {\mathcal G}_{n-k}\to
               {\mathcal G}_{k}\times {\mathcal G}_{n-k} :
               (S,U)\mapsto \big(f'(S),f''(U)\big).
\end{equation}
Each semilinear isomorphism $l:V\to V$ induces, for $i=1,2,\dots,n-1$,
bijections
\begin{equation}
   G_{i}(l):{\mathcal G}_{i}\to {\mathcal G}_{i} : S\mapsto l(S).
\end{equation}
Obviously, the restriction of
\begin{equation}
   G_{k}(l)\times G_{n-k}(l)
\end{equation}
to ${\mathcal G}$ is an {\A}transformation and a {\C}transformation.
}\end{exmp}

\begin{exmp}\label{exmp:2}{\rm
For any two mappings $g':{\mathcal G}_{k}\to {\mathcal G}_{n-k}$ and
$g'':{\mathcal G}_{n-k}\to {\mathcal G}_{k}$ we put
\begin{equation}
   g'\dtimes g'':{\mathcal G}_{k}\times {\mathcal G}_{n-k}\to
                       {\mathcal G}_{k}\times {\mathcal G}_{n-k} :
  (S,U)\mapsto \big(g''(U),g'(S)\big).
\end{equation}
Let $V^*$ denote the dual space of $V$. Each semilinear isomorphism $s:V\to
V^{*}$ induces, for $i=1,2,\dots,n-1$, the bijections
\begin{equation}
    D_{i}(s):{\mathcal G}_{i}\to{\mathcal G}_{n-i} :
    S\mapsto \big(s(S)\big)^{\circ},
\end{equation}
where $\big(s(S)\big)^{\circ}$ denotes the annihilator of $s(S)$. The
restriction of
\begin{equation}
    D_{k}(s)\dtimes D_{n-k}(s)
\end{equation}
to ${\mathcal G}$ is an {\A}transformation and a {\C}transformation. Observe
that a necessary and sufficient condition for the existence of such an
isomorphism $s$ is that the underlying division ring admits an
anti-auto\-mor\-phism.}
\end{exmp}

\begin{exmp}\label{exmp:3}{\rm
Now suppose that $n=2k$. We assume that $l:V\to V$ and $s:V\to V^{*}$ are
semilinear isomorphisms. The restrictions of
\begin{equation}
      G_{k}(l)\dtimes G_{k}(l)\;\mbox{ and }\;D_{k}(s)\times D_{k}(s)
\end{equation}
to ${\mathcal G}$ both are {\A}transformations and {\C}transformations.
}\end{exmp}

\begin{exmp}\label{exmp:n=2}{\rm
Let $n=2$ and $k=1$. Choose an arbitrary bijection $f:\cG_1\to\cG_1$. Then the
restrictions of  $f\times f$ and $f\dtimes f$ to $\cG$ both are
{\A}transformations and {\C}transformations.}
\end{exmp}

We are now in a position to state our main results:

\begin{theorem}\label{thm:C}
Every closeness preserving transformation of ${\mathcal G}$ is one of the
mappings considered in Examples~{\rm\ref{exmp:1}--\ref{exmp:n=2}}. Hence it is
an adjacency preserving transformation.
\end{theorem}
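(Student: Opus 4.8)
The plan is to recover all the structure of a C-transformation from the \emph{closeness graph} on $\cG$ (vertices the elements of $\cG$, edges the pairs with $(S,U)\approx(S',U')$), of which a C-transformation is by definition a graph automorphism. First I would describe its maximal cliques. For fixed $S$ the set $C_S:=\{(S,U)\in\cG\}$ is a clique, and likewise $C_U:=\{(S,U)\in\cG\}$ for fixed $U$. If a clique contains two vertices $(S,U),(S,U')$ with $U\neq U'$, then any further vertex of the clique, being close to both, must have first coordinate $S$; so every clique meeting two vertices of equal first (resp.\ second) coordinate lies inside a single $C_S$ (resp.\ $C_U$), and each maximal clique is some $C_S$ or some $C_U$. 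Since each complement of $S$ is the graph of a homomorphism, $C_S$ and $C_U$ have at least $2^{k(n-k)}\ge 4$ elements when $n\ge3$ (then $k(n-k)\ge2$); hence for $n\ge 3$ every maximal clique has size $\ge 3$ and is of exactly one of the two types. The case $n=2$, $k=1$ is treated separately at the end.

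Next I would form the \emph{intersection graph} of maximal cliques: its vertices are the $C_S$ ($S\in\cG_k$) together with the $C_U$ ($U\in\cG_{n-k}$), and two cliques are joined when they meet. Two cliques of the same type are disjoint (a vertex determines both coordinates), while $C_S$ and $C_U$ meet precisely in $(S,U)$ when $S\oplus U=V$, and otherwise not. Thus this graph is bipartite with colour classes $\{C_S\}$ and $\{C_U\}$, its edges encoding complementarity. By Lemma~\ref{lemma:connect} the closeness graph is connected, so the intersection graph is connected as well, whence its bipartition is unique up to swapping the classes. As $f$ induces an automorphism of the intersection graph, $f$ either preserves both families or interchanges them.

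In the first case each $C_S$ goes to some $C_{f'(S)}$ and each $C_U$ to some $C_{f''(U)}$; since $(S,U)$ is the unique common vertex of $C_S$ and $C_U$, this yields $f=(f'\times f'')|_{\cG}$ with bijections $f'$ of $\cG_k$ and $f''$ of $\cG_{n-k}$. In the second case the same reasoning gives $f=(g'\dtimes g'')|_{\cG}$. In either case the invariance of $\cG$ under $f$ and $f^{-1}$ translates into the statement that the coordinate maps \emph{preserve complementarity in both directions}, i.e.\ $S\oplus U=V\Leftrightarrow f'(S)\oplus f''(U)=V$, and analogously for $g',g''$.

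The heart of the argument, and the step I expect to be the main obstacle, is a lemma stating that complementarity-preserving bijections between Grassmannians are automatically \emph{adjacency}-preserving. The intended tool is an incidence-geometric description of $\sim$ via common complements: in the intersection graph the common neighbours of $C_S$ and $C_{S'}$ are exactly the subspaces complementary to both $S$ and $S'$, so $f$ maps this common-complement set bijectively for every pair. By $\GL$-homogeneity its cardinality depends only on $\dim(S\cap S')$, and over a finite field one checks that this number is a strictly monotone, hence injective, function of $\dim(S\cap S')$; thus $f$ preserves $\dim(S\cap S')$ for every pair, in particular adjacency (for general division rings a geometric variant of this characterization is needed, which is where the real difficulty lies). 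Granting this, $f'$, $f''$ (resp.\ $g'$, $g''$) preserve Grassmann adjacency, so $f$ preserves the relation $\sim$ of (\ref{eq:G.adjacent}); that is, $f$ is an A-transformation, and Theorem~\ref{thm:A} identifies it as one of the maps of Examples~\ref{exmp:1}--\ref{exmp:3}. Finally, for $n=2$, $k=1$ the clique reduction still gives $f=(f'\times f'')|_{\cG}$ or $f=(g'\dtimes g'')|_{\cG}$; here complementarity reads $S\neq U$, and a one-line computation forces $f'=f''$ (resp.\ $g'=g''$), while adjacency coincides with closeness, so every such map is admissible and we obtain exactly Example~\ref{exmp:n=2}.
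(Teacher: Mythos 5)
Your first half is sound and is essentially the paper's own opening move (Lemma~\ref{lemma:3} and part (a) of the proof): the maximal cliques of the closeness graph are the sets $\cG(S,V)$ and $\cG(V,U)$, an automorphism either preserves or interchanges the two families, and hence $f$ is the restriction to $\cG$ of $f'\times f''$ or of $g'\dtimes g''$ with component bijections that preserve complementarity in both directions. From there on, however, there are genuine gaps. The step you yourself flag as the ``real difficulty'' is in fact the core of the paper: your counting argument (the number of common complements as an injective function of $\dim(S\cap S')$) is only available over finite fields, while $V$ is a vector space over an \emph{arbitrary} division ring, possibly infinite, so cardinality carries no information. The paper fills exactly this hole with Theorem~\ref{thm:3}: $S_1\sim S_2$ if and only if there exists $S\in\cG_k-\{S_1,S_2\}$ such that every complement of $S$ is a complement of $S_1$ or of $S_2$. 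This characterizes adjacency purely in terms of the complementarity relation, hence is transported by the pair $(f',f'')$, and it is proved by a synthetic argument that never counts anything. Without such a statement your proof simply does not exist outside the finite case.

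The conclusion of your argument is also circular within this paper: you invoke Theorem~\ref{thm:A} to identify $f$, but Theorem~\ref{thm:A} is deduced in Section~\ref{sect:thm:A} \emph{from} Theorem~\ref{thm:C}, by showing that every {\A}transformation is a {\C}transformation. The non-circular route, which the paper takes, is to apply Chow's theorem directly to $f'$ and $f''$ when $1<k<n-1$; and this still leaves two pieces of work your proposal omits entirely. First, Chow produces two a priori unrelated semilinear maps $l',l''$ (or $s',s''$), whereas Examples~\ref{exmp:1}--\ref{exmp:3} require a \emph{single} one; the paper's Lemmas~\ref{lemma:1} and~\ref{lemma:2} use complementarity once more to force $G_i(l')=G_i(l'')$, resp.\ $D_i(s')=D_i(s'')$, and to exclude mixed pairs such as $G_k(l')$ together with $D_k(s'')$ when $n=2k>2$. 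Second, for $k=1$ or $k=n-1$ with $n>2$ your scheme collapses: adjacency on $\cG_1$ is mere inequality, so ``adjacency preserving'' is vacuous and Chow is not applicable; the paper instead observes that non-complementarity is incidence between points and hyperplanes, so that $(f',f'')$ preserves this incidence and the fundamental theorem of projective geometry applies. (Your treatment of $n=2$ is fine and agrees with Lemma~\ref{lemma:n=2}.)
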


It is trivial that each {\A}transformation is a {\C}transformation if $k=1$ or
if $k=n-1$. In Section~\ref{sect:thm:A} we shall prove this statement for the
general case. Thus the following statement holds true.

\begin{theorem}\label{thm:A}
Every adjacency preserving transformation of ${\mathcal G}$ is one of the
mappings considered in Examples~{\rm\ref{exmp:1}--\ref{exmp:n=2}}. Hence it is
a closeness preserving transformation.
\end{theorem}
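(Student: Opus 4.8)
The plan is to derive Theorem~\ref{thm:A} from Theorem~\ref{thm:C} by showing that every {\A}transformation of $\cG$ is automatically a {\C}transformation. If $k=1$ or $k=n-1$ then, as already noted, the relations $\sim$ and $\approx$ coincide on $\cG$, so there is nothing to prove. Hence I assume $1<k<n-1$, so that both $k$ and $n-k$ lie in $\{2,\dots,n-2\}$ and the factors $\cG_k$ and $\cG_{n-k}$ are Grassmann spaces in the proper sense.

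For a fixed $S\in\cG_k$ call the set $\{(S,U)\in\cG\}$ the \emph{$S$-fiber}, and for fixed $U\in\cG_{n-k}$ call $\{(S,U)\in\cG\}$ the \emph{$U$-fiber}; collectively these are the \emph{fibers}. Two distinct elements of $\cG$ lie in a common fiber precisely when they share one of their components, i.e.\ precisely when they are close. Consequently it suffices to prove that an {\A}transformation $f$ carries fibers onto fibers: then any close pair lies in a common fiber, its image lies in the image fiber, and (being distinct) is close again; the same argument applied to $f^{-1}$ shows $f$ to be a {\C}transformation, whence Theorem~\ref{thm:C} identifies it.

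Since $f$ and $f^{-1}$ preserve $\sim$, the map $f$ is an automorphism of the graph $(\cG,\sim)$, so it suffices to recover the fibers from this graph alone. The key observation is local. Every edge through a vertex $(S,U)$ keeps exactly one component fixed, so the neighbours of $(S,U)$ split into those of the form $(S,U')$ with $U'\sim U$ (the \emph{left} neighbours) and those of the form $(S',U)$ with $S'\sim S$ (the \emph{right} neighbours). A left neighbour $(S,U')$ and a right neighbour $(S',U)$ differ in both components and are therefore never adjacent; thus in the subgraph induced on the neighbours of $(S,U)$ there is no edge joining a left to a right neighbour. On the other hand, the left neighbours with the induced adjacency form the subgraph of $\cG_{n-k}$ spanned by those neighbours of $U$ that are complementary to $S$, and symmetrically for the right neighbours. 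The crux of the argument is to show that each of these two induced subgraphs is connected. Granting this, the neighbours of every vertex fall into exactly two connected components---the left and the right neighbours---and this dichotomy is preserved by the graph automorphism $f$.

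It remains to globalise. Each edge of $(\cG,\sim)$ thereby receives, at each of its two endpoints, a label in $\{\text{left},\text{right}\}$; propagating these labels along edges and invoking the connectedness furnished by Lemma~\ref{lemma:connect} shows that the two families of fibers are exactly the connected components of the two monochromatic subgraphs. Hence $f$ permutes the fibers, possibly interchanging the two families (as the mappings built from $D_k(s)$ indeed do), and is therefore a {\C}transformation. The main obstacle is the connectedness assertion of the previous paragraph: one must verify that deleting, from the local graph of a vertex of a Grassmann space, those neighbours which fail the complementarity condition $S+U'=V$ still leaves a connected graph. I expect this to follow from a direct analysis of the local graph of $\cG_{n-k}$ via its two clique families (the pencils lying in stars and those lying in tops), together with the fact that the excluded neighbours form a proper subconfiguration; but checking that no such deletion disconnects the local graph uniformly for all admissible $k$ is the delicate point.
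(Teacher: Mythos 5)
Your top-level strategy coincides with the paper's (prove that every {\A}transformation is a {\C}transformation, then invoke Theorem~\ref{thm:C}), and the graph-theoretic skeleton you build on it is sound: the neighbours of $(S,U)$ do split into ``left'' and ``right'' parts with no edges between them, and your propagation of this dichotomy along the paths supplied by Lemma~\ref{lemma:connect} would indeed show that $f$ permutes the fibers $\cG(S,V)$, $\cG(V,U)$ and hence preserves closeness. But there is a genuine gap exactly where you flag it: you never prove that the left neighbours of $(S,U)$, i.e.\ the set of all $U'\sim U$ with $S+U'=V$, induce a \emph{connected} subgraph (likewise for the right neighbours). This claim carries the entire proof --- without it, nothing prevents an {\A}transformation from sending part of a fiber into one fiber and part into another --- and ``the excluded neighbours form a proper subconfiguration'' is not an argument, since deleting a proper vertex set can certainly disconnect a graph in general.

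The good news is that the missing claim is true and admits a short pencil argument of just the kind the paper itself uses. Given left neighbours $U_1',U_2'$ of $(S,U)$, put $W_1:=U\cap U_1'\in\cG_{n-k-1}$ and $T_2:=U+U_2'\in\cG_{n-k+1}$, and consider the pencil of all $X\in\cG_{n-k}$ with $W_1\subset X\subset T_2$. Since $S+U=V$, the subspace $Q:=S\cap T_2$ is $1$-dimensional and not contained in $U$; an element $X$ of the pencil fails to be complementary to $S$ precisely when $Q\subset X$, i.e.\ precisely when $X=W_1+Q$. As the pencil has at least three elements, it contains some $U_3'$ distinct from both $U$ and $W_1+Q$; this $U_3'$ is again a left neighbour of $(S,U)$ and is adjacent (or equal) to $U_1'$ (common hyperplane $W_1$) and to $U_2'$ (common superspace $T_2$). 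So each local ``side'' is connected of diameter at most $2$, and with this inserted your proof is complete. For comparison, the paper takes a different implementation route: it classifies the maximal cliques of $(\cG,\sim)$ in Lemma~\ref{lemma:5} (resting on Chow's description of maximal cliques in a Grassmann space) and then shows, again via pencils and Lemma~\ref{lemma:connect}, that the image of each fiber $\cG(S,V)$ is a maximal {\C}subset. Your local-neighbourhood approach avoids the clique classification entirely, at the price of the connectedness lemma above; once that lemma is supplied, it is a perfectly viable, arguably more elementary, alternative.
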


It is clear that our definitions of adjacency and closeness remain meaningful
on the entire cartesian product $\cG_{k}\times\cG_{n-k}$. Also the notions of
{\C} and {\A}transformation and Examples~\ref{exmp:1}--\ref{exmp:n=2} can be
carried over accordingly. However, Theorems~\ref{thm:C} and \ref{thm:A} do not
remain unaltered when $\cG$ is replaced with $\cG_{k}\times\cG_{n-k}$:

\begin{exmp}\label{exmp:CGxG}{\rm
Let $f':\cG_{k}\to\cG_{k}$ and $f'':\cG_{n-k}\to\cG_{n-k}$ be bijections. Then
$f'\times f''$ is a {\C}transformation. Also, if $g':\cG_{k}\to\cG_{n-k}$ and
$g'':\cG_{n-k}\to\cG_{k}$ are bijections then $g'\dtimes g''$ is a
{\C}transformation.}
\end{exmp}

For the sake of completeness, let us state the following rather trivial result:

\begin{theorem}\label{thm:CGxG}
Every closeness preserving transformation of $\cG_{k}\times\cG_{n-k}$ is one of
the mappings considered in Example~{\rm\ref{exmp:CGxG}}.
\end{theorem}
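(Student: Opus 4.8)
The plan is to recast the closeness relation as a graph and to recognize Theorem~\ref{thm:CGxG} as a statement about its automorphisms. Write $A:=\cG_k$ and $B:=\cG_{n-k}$, and regard $\cG_k\times\cG_{n-k}=A\times B$ as the vertex set of a graph $\Gamma$ in which $(S,U)$ and $(S',U')$ are joined by an edge exactly when they are close. Since $n\ge 2$ and $1\le k\le n-1$, both $A$ and $B$ have at least two elements. A bijection of $A\times B$ is a \C transformation precisely when it, together with its inverse, sends edges to edges; that is, the \C transformations are nothing but the automorphisms of $\Gamma$. So the whole task reduces to determining $\operatorname{Aut}\Gamma$ and matching it against Example~\ref{exmp:CGxG}.

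The first step is to describe the maximal cliques of $\Gamma$. For $S\in A$ call $\{S\}\times B$ a \emph{row}, and for $U\in B$ call $A\times\{U\}$ a \emph{column}. Rows and columns are cliques, since two distinct vertices that share one coordinate are close. Conversely, two distinct close vertices agree in exactly one coordinate, so every edge lies either in a unique row or in a unique column; and one checks that as soon as a clique contains two vertices of a common row (respectively column) it must lie inside that row (respectively column), because any third vertex close to both is forced to share the repeated coordinate. Hence the maximal cliques of $\Gamma$ are precisely the rows and the columns, all of which have at least two elements.

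The second step is to separate rows from columns intrinsically. Two distinct rows are disjoint, two distinct columns are disjoint, while a row and a column always meet in exactly one vertex, namely $(\{S\}\times B)\cap(A\times\{U\})=\{(S,U)\}$. Consequently the relation ``equal or disjoint'' on the set of maximal cliques is an equivalence relation with exactly two classes, the class $\mathcal R$ of all rows and the class $\mathcal C$ of all columns. Any automorphism $f$ of $\Gamma$ permutes maximal cliques and preserves disjointness, hence permutes these two classes: either it fixes both $\mathcal R$ and $\mathcal C$, or it interchanges them; the latter forces $|\mathcal R|=|\mathcal C|$, i.e.\ $|A|=|B|$.

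Finally I would reconstruct $f$ from its action on the cliques, using that each vertex is the unique common point of the row and the column through it. If $f$ fixes the two classes, it induces bijections $\sigma$ of $A$ and $\tau$ of $B$ with $f(\{S\}\times B)=\{\sigma(S)\}\times B$ and $f(A\times\{U\})=A\times\{\tau(U)\}$, whence $f(S,U)=(\sigma(S),\tau(U))$, a map of type $f'\times f''$. If instead $f$ interchanges the classes, it induces bijections $g'\colon A\to B$ and $g''\colon B\to A$ with $f(\{S\}\times B)=A\times\{g'(S)\}$ and $f(A\times\{U\})=\{g''(U)\}\times B$, whence $f(S,U)=(g''(U),g'(S))$, a map of type $g'\dtimes g''$. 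In either case $f$ is one of the mappings in Example~\ref{exmp:CGxG}. The only genuine content lies in the clique analysis of the first two steps; everything afterwards is bookkeeping, which is why the result is essentially trivial.
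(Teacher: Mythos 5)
Your proof is correct and follows essentially the same route as the paper: both identify the maximal cliques (maximal \C subsets) of the closeness relation as the rows $\{S\}\times\cG_{n-k}$ and columns $\cG_k\times\{U\}$, use the intersection pattern (same kind disjoint, different kind meeting in one point) to show that any \C transformation is either type preserving or type interchanging, and then read off the form $f'\times f''$ or $g'\dtimes g''$. Your write-up merely makes explicit (via the graph-automorphism language and the clique verification) what the paper dismisses as ``obviously'' and ``easy verification''.
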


\begin{exmp}\label{exmp:AGxG}{\rm
If $f':\cG_{k}\to\cG_{k}$ and $f'':\cG_{n-k}\to\cG_{n-k}$ are bijections which
preserve adjacency in both directions then $f'\times f''$ is an
{\A}transformation. Also, if $g':\cG_{k}\to\cG_{n-k}$ and
$g'':\cG_{n-k}\to\cG_{k}$ are bijections which preserve adjacency in both
directions then $g'\dtimes g''$ is an {\A}transformation.

Suppose that $k=1$ or $k=n-1$. Then it suffices to require that the mappings
$f'$, $f''$, $g'$ and $g''$ from above are bijections in order to obtain an
{\A}transformation of $\cG_{k}\times\cG_{n-k}$.

Provided that $1<k<n-1$, we can apply Chow's theorem (\cite[p.~38]{Chow},
\cite[p.~81]{dieu-71}) to describe explicitly the mappings from above.

In the first case we have $f'=G_{k}(l')$ or $f'=D_{k}(s')$ (only when $n=2k$),
and $f''=G_{n-k}(l'')$ or $f''=D_{k}(s'')$ (only when $n=2k$).

In the second case we have $g'=D_{k}(s')$ or $g'=G_{k}(l')$ (only when $n=2k$),
and $g''=D_{n-k}(s'')$ or $g''=G_{k}(l'')$ (only when $n=2k$).

Here $l',l'':V\to V$ and $s',s'':V\to V^*$ denote semilinear isomorphisms.

 }\end{exmp}

We shall see that the following result is a consequence of
\cite[Theorem~1.14]{NP}:

\begin{theorem}\label{thm:AGxG}
Every adjacency preserving transformation of $\cG_{k}\times\cG_{n-k}$ is one of
the mappings considered in Example~{\rm\ref{exmp:AGxG}}.
\end{theorem}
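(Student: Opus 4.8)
The plan is to reduce Theorem~\ref{thm:AGxG} to the cited result \cite[Theorem~1.14]{NP} on collineations of products of partial linear spaces, and then to unwind that abstract description into the concrete mappings listed in Example~\ref{exmp:AGxG}. First I would make precise the partial linear space structure underlying $\cG_k\times\cG_{n-k}$: its point set is the cartesian product, and its lines are, in the spirit of \cite{NP}, the ``horizontal'' and ``vertical'' lines, i.e.\ sets of the form $\{S\}\times\ell''$ and $\ell'\times\{U\}$, where $\ell'$ and $\ell''$ range over the lines of the Grassmann spaces $\cG_k$ and $\cG_{n-k}$ respectively. The key observation is that collinearity in this product space coincides exactly with the adjacency relation $\sim$ defined in \eqref{eq:G.adjacent}: two points are collinear precisely when they agree in one coordinate and are adjacent in the other. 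Hence an \A transformation of $\cG_k\times\cG_{n-k}$ is nothing but a collineation of this partial linear space, and \cite[Theorem~1.14]{NP} applies directly.

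The substance of the argument is then bookkeeping about the two factors. The cited theorem tells us that any such collineation either preserves the two families of lines (the ``non-interchanging'' case) or swaps them (the ``interchanging'' case), and in each case is a product of collineations between the relevant factor spaces. In the non-interchanging case the transformation has the form $f'\times f''$ with $f':\cG_k\to\cG_k$ and $f'':\cG_{n-k}\to\cG_{n-k}$ collineations (equivalently, adjacency preserving bijections) of the respective Grassmann spaces. In the interchanging case, a collineation can swap the two families of lines only if the two factor spaces are isomorphic as partial linear spaces; this forces $\cG_k\cong\cG_{n-k}$, and the transformation takes the form $g'\dtimes g''$ with $g':\cG_k\to\cG_{n-k}$ and $g'':\cG_{n-k}\to\cG_k$ adjacency preserving bijections. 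These are exactly the two families appearing in Example~\ref{exmp:AGxG}.

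Next I would carry out the case analysis $1<k<n-1$ versus $k\in\{1,n-1\}$. When $1<k<n-1$ the spaces $\cG_k$ and $\cG_{n-k}$ are genuine Grassmann spaces, so Chow's theorem (\cite[p.~38]{Chow}, \cite[p.~81]{dieu-71}) identifies each factor collineation as $G_i(l)$ for a semilinear isomorphism $l:V\to V$, or---only in the self-dual situation producing a map $\cG_i\to\cG_i$, which requires $n=2k$ and hence $i=k$---as $D_k(s)$ for a semilinear isomorphism $s:V\to V^*$. Matching the source and target dimensions of $f'$, $f''$, $g'$, $g''$ against the admissible forms $G_i(l)$ and $D_i(s)$ yields precisely the two lists displayed in Example~\ref{exmp:AGxG}. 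When $k=1$ or $k=n-1$ the factors are projective spaces, so adjacency is trivial (any two distinct points are adjacent) and every bijection of a factor is automatically adjacency preserving; this is the degenerate clause of the example, and here no semilinear description is forced.

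The main obstacle I anticipate is not any hard computation but rather verifying that the hypotheses of \cite[Theorem~1.14]{NP} are genuinely met---in particular, that our product of Grassmann spaces satisfies whatever nondegeneracy or connectivity conditions that theorem requires, and that our adjacency relation really is the collinearity relation of the intended partial linear space structure (so that an \A transformation is the same thing as a collineation in \emph{both} directions). Connectivity is supplied by Lemma~\ref{lemma:connect} together with \eqref{eq:A.connect}, and the identification of lines with adjacency classes is immediate from the definition \eqref{eq:adjacent} of $\sim$ on the Grassmannians. Once these compatibility checks are in place, the interchanging/non-interchanging dichotomy of \cite{NP} plus Chow's theorem deliver the result with only routine dimension-matching.
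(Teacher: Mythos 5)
Your overall route---reduce to \cite[Theorem~1.14]{NP} and then identify the factor maps via Chow's theorem---is the same as the paper's, but your proposal has a genuine gap at precisely the step that carries the weight of the proof. You assert that, since collinearity in the product space coincides with adjacency, an {\A}transformation ``is nothing but'' a collineation, and later you dismiss this as ``immediate from the definition''. It is not. A collineation must map \emph{lines} onto lines, and a bijection that preserves the collinearity \emph{relation} in both directions need not do that: in a Grassmann space the maximal cliques of the adjacency graph are the stars and the tops (and in the product they are the sets (\ref{eq:typeA1})--(\ref{eq:typeA4}) of Lemma~\ref{lemma:5}), not the pencils, so the lines cannot be recovered from adjacency as ``adjacency classes''. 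The paper spends the core of its proof closing exactly this hole: for two adjacent points it computes the double perp $\{(S,U),(S,U')\}^{\perp\perp}$, where $\perp$ means ``adjacent or equal'', and shows in (\ref{eq:line1}) and (\ref{eq:line2}) that this set is precisely the line joining them. Since an {\A}transformation preserves $\perp$ in both directions, it preserves double perps, hence maps lines onto lines and is a collineation; only at that point does \cite[Theorem~1.14]{NP} become applicable. Without this (or an equivalent) intrinsic characterization of the lines, your argument does not get off the ground.

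Moreover, your treatment of $k\in\{1,n-1\}$ is structured so that \cite{NP} is applied first and the case analysis done afterwards, and there the gap is not merely unproved but unfixable, because the claim itself is false: for $k=1$ and $n\geq 3$, adjacency on $\cG_1$ is mere distinctness, so $f'\times 1_{\cG_{n-1}}$ is an {\A}transformation for \emph{every} bijection $f':\cG_1\to\cG_1$, yet it is a collineation of the product of the two projective spaces only if $f'$ maps projective lines to projective lines. Hence \cite[Theorem~1.14]{NP} cannot be invoked for these $k$ at all. The paper instead disposes of this case in one line by quoting Theorem~\ref{thm:CGxG}: when $k\in\{1,n-1\}$ adjacency equals closeness, and the decomposition into $f'\times f''$ or $g'\dtimes g''$ comes from the maximal {\C}subset argument. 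Your observation that ``no semilinear description is forced'' in this case is correct, but you still owe the decomposition, and it must come from somewhere other than \cite{NP}.
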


\begin{rem}{\rm
Suppose that the underlying division ring of $V$ is not of characteristic $2$.
Let $u\in \GL(V)$ be an involution. Then there exist two invariant subspaces
$U_{+}(u)$ and $U_{-}(u)$ with $V=U_{+}(u)\oplus U_{-}(u)$ such that $u(x)=\pm
x$ for each $x\in U_{\pm}(u)$. If $\dim U_{+}(u)= r$ then $\dim U_{-}(u)=n-r$, and
$u$ is called an \emph{$(r,n-r)$-involution}.

For our fixed $k$ let $J$ be the set of all $(k,n-k)$-involutions. There exists
a bijection
\begin{equation}\label{}
      \gamma: J\to \cG : u \mapsto \big(U_{+}(u),U_{-}(u)\big).
\end{equation}
Two $(k,n-k)$-involutions $u$ and $v$ are said to be \emph{adjacent\/} if the
corresponding elements of ${\mathcal G}$ are adjacent. This holds if, and only
if, the product of $u$ and $v$ (in any order) is a transvection $\neq 1_V$.

Now let $f:J\to J$ be a bijection which preserves adjacency in both directions.
We apply Theorem~\ref{thm:A} to the {\A}transformation $\gamma f\gamma^{-1}:
{\mathcal G}\to\cG$. If $n>2$ and $n\ne 2k$ then this last mapping is given as
in Example~\ref{exmp:1} or \ref{exmp:2}. This means that  $f$ can be extended
to an automorphism of the group $\GL(V)$ as follows: To each $u\in \GL(V)$ we
assign $lul^{-1}$ or the contragredient  of $sus^{-1}$, respectively.}
\end{rem}

\section{Proof of Theorem~\ref{thm:C}}\label{sect:thm:C}

Our proof of Theorem~\ref{thm:C} will be based on several lemmas and the
subsequent characterization. In the case $n=2k$ this statement is a particular
case of a result in \cite{BlunckHavlicek}. The direct analogue of
Theorem~\ref{thm:3} for buildings can be found in \cite[Proposition~4.2]{AVM}.

\begin{theorem}\label{thm:3}
Let $1\leq k\leq n-1$. Then for any two distinct $S_{1},S_{2}\in {\mathcal
G}_{k}$ the following two conditions are equivalent:
\begin{enumerate}
  \item[(a)] $S_{1}$ and $S_{2}$ are adjacent,
  \item[(b)]
There exists an $S\in {\mathcal G}_{k}-\{S_{1},S_{2}\}$ such that for all
$U\in {\mathcal G}_{n-k}$ the condition $(S,U)\in {\mathcal G}$ implies that
$(S_{1},U)$ or $(S_{2},U)$ belongs to ${\mathcal G}$.
\end{enumerate}
\end{theorem}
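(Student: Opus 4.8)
The plan is to prove the two implications separately: (a)$\Rightarrow$(b) as an explicit construction, and (b)$\Rightarrow$(a) by contraposition.

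For (a)$\Rightarrow$(b), suppose $S_{1}\sim S_{2}$ and put $W:=S_{1}\cap S_{2}$ and $T:=S_{1}+S_{2}$, so that $\dim W=k-1$ and $\dim T=k+1$. The $k$-dimensional subspaces lying between $W$ and $T$ form a projective line, and since a projective line over any division ring carries at least three points, I may choose such an $S\notin\{S_{1},S_{2}\}$. I would then fix an arbitrary $U\in\cG_{n-k}$ with $(S,U)\in\cG$ and show $U$ is complementary to $S_{1}$ or to $S_{2}$. The key computation is that $L:=U\cap T$ is one-dimensional (because $S\subseteq T$ forces $U+T=V$), that $L\cap W=0$ (because $W\subseteq S$ and $U\cap S=0$), and that $U\cap S_{i}=L\cap S_{i}$ for $i=1,2$ (because $S_{i}\subseteq T$). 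As $\dim L=1$, each $L\cap S_{i}$ is either $0$ or $L$, and $L$ cannot lie in both $S_{1}$ and $S_{2}$, for otherwise $L\subseteq S_{1}\cap S_{2}=W$, contradicting $L\cap W=0$. Hence $U\cap S_{i}=0$ for some $i$, and since the dimensions add up to $n$ this yields $(S_{i},U)\in\cG$.

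For (b)$\Rightarrow$(a) I would argue by contraposition. Assume $S_{1}\ne S_{2}$ are \emph{not} adjacent, so $\dim(S_{1}\cap S_{2})\le k-2$, and let $S$ be any $k$-dimensional subspace different from $S_{1}$ and $S_{2}$. I must produce a complement $U$ of $S$ meeting both $S_{1}$ and $S_{2}$ nontrivially; then $(S,U)\in\cG$ while $(S_{1},U)\notin\cG$ and $(S_{2},U)\notin\cG$, which negates (b). Passing to the quotient map $\pi\colon V\to V/S$, I set $\bar S_{i}:=\pi(S_{i})$, which is nonzero since $S_{i}\ne S$ and $\dim S_{i}=\dim S$. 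It suffices to find nonzero vectors $\bar y_{1}\in\bar S_{1}$ and $\bar y_{2}\in\bar S_{2}$ that are either equal or linearly independent: lifting them to $y_{i}\in S_{i}$, the subspace $\langle y_{1},y_{2}\rangle$ meets $S$ trivially and extends to a complement $U$ of $S$, whence $0\ne y_{i}\in U\cap S_{i}$.

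The crux is to show such vectors always exist, and I expect this to be the main obstacle. If $\dim\bar S_{1}\ge 2$ or $\dim\bar S_{2}\ge 2$, an independent pair is immediate; and if $S_{1}\cap S_{2}\not\subseteq S$, a common nonzero image $\bar y_{1}=\bar y_{2}$ exists. The only potential obstruction is the case $\dim\bar S_{1}=\dim\bar S_{2}=1$ together with $\bar S_{1}=\bar S_{2}$ and $S_{1}\cap S_{2}\subseteq S$. To dispose of it, I would note that $\dim\bar S_{i}=1$ means $\dim(S_{i}\cap S)=k-1$, while $\bar S_{1}=\bar S_{2}$ forces $S+S_{1}=S+S_{2}=:T'$ with $\dim T'=k+1$. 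Then $S_{1}$ and $S_{2}$ are distinct hyperplanes of $T'$, so $\dim(S_{1}\cap S_{2})=k-1$, i.e.\ $S_{1}\sim S_{2}$, contradicting the assumption. Thus the obstruction never arises and the required $U$ always exists.
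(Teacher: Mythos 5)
Your proof is correct, and the two directions compare differently with the paper. For (a)$\Rightarrow$(b) you use the same construction as the paper (any $S$ with $S_1\cap S_2\subset S\subset S_1+S_2$, $S\neq S_1,S_2$), except that you actually carry out the verification the paper leaves implicit, via the line $L=U\cap(S_1+S_2)$ — a nice explicit check. For (b)$\Rightarrow$(a), however, you take a genuinely different route. The paper argues directly from the witness $S$: it first shows $(W_1+W_2)\cap S\neq 0$ for all nonzero $W_1\subset S_1$, $W_2\subset S_2$ (otherwise $W_1+W_2$ extends to a complement of $S$, contradicting (b)), deduces $S_1\cap S_2\subset S$, then proves $\dim(S\cap S_i)=k-1$ by an incidence argument inside a three-dimensional subspace, and finally excludes $\dim(S_1\cap S_2)=k-2$ by a dimension count. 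You instead argue by contraposition: assuming $S_1,S_2$ are not adjacent, you show that \emph{every} candidate $S$ admits a complement $U$ meeting both $S_1$ and $S_2$ nontrivially, working in the quotient $V/S$ and splitting into cases (linearly independent images; a common vector of $S_1\cap S_2$ outside $S$; and the residual case $\dim\pi(S_1)=\dim\pi(S_2)=1$ with $\pi(S_1)=\pi(S_2)$, which you correctly show forces $S_1\sim S_2$). Your route is shorter and more elementary — pure linear algebra in the quotient — while the paper's direct argument extracts structural information about the witness $S$ (namely $S\supset S_1\cap S_2$ and $\dim(S\cap S_i)=k-1$), i.e.\ it essentially recovers that $S$ lies in the pencil. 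One small imprecision in your write-up: in the ``equal images'' case, the claim that $\langle y_1,y_2\rangle$ meets $S$ trivially is false for arbitrary distinct lifts $y_1\in S_1$, $y_2\in S_2$ (since then $0\neq y_1-y_2\in S$); it holds only when one takes the common lift $y_1=y_2=y\in(S_1\cap S_2)\setminus S$, which is exactly what your hypothesis $S_1\cap S_2\not\subseteq S$ supplies — so the argument stands, but the reduction should be phrased with the common lift.
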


\begin{proof}
(a) $\Rightarrow$ (b). If $S_{1}$ and $S_{2}$ are adjacent then $S_{1}\cap
S_{2}\in{\mathcal G}_{k-1}$ and $S_{1}+S_{2}\in{\mathcal G}_{k+1}$. Every
$S\in {\mathcal G}_{k}-\{S_{1},S_{2}\}$ satisfying the condition
\begin{equation}
  S_{1}\cap S_{2}\subset S\subset S_{1}+S_{2}
\end{equation}
has the required property, and at least one such $S$ exists.

(b) $\Rightarrow$ (a). The proof of this implication will be given in several
steps. First we show that
\begin{equation}\label{eq:W-S}
  0\neq W_{1}\subset S_1\mbox{ and } 0\neq W_{2}\subset S_{2} \Rightarrow
  (W_{1}+W_{2})\cap S\neq 0.
\end{equation}
Assume, contrary to (\ref{eq:W-S}), that $(W_{1}+W_{2})\cap S=0$. Then there
exists a complement $U\in {\mathcal G}_{n-k}$ of $S$ containing
$W_{1}+W_{2}$. By our hypothesis, $U$ is a complement of $S_{1}$ or $S_{2}$.
This contradicts $W_{1}\subset S_{1}$ and $W_{2}\subset S_{2}$.

Our second assertion is
\begin{equation}\label{eq:S1-S2-S}
      S_{1}\cap S_{2}\subset S.
\end{equation}
This inclusion is trivial if $S_{1}\cap S_{2}$ is zero. Otherwise, let
$P\subset S_{1}\cap S_{2}$ be an arbitrarily chosen $1$-dimensional subspace.
We apply (\ref{eq:W-S}) to $W_{1}=W_{2}=P$. This shows that $P\cap S\ne 0$.
Hence $P\subset S$, as required.

The third step is to show that
\begin{equation}\label{eq:dim.Si-S}
     \dim (S\cap S_{1}) =\dim (S\cap S_{2}) = k-1.
\end{equation}
By symmetry, it suffices to establish that
\begin{equation}
     W_1\cap(S\cap S_{1})\ne 0
\end{equation}
for all $2$-dimensional subspaces $W_1\subset S_{1}$: Let us take a
$1$-dimensional subspace $P_{2}\subset S_{2}$ such that $P_{2}\cap S=0$. Then
(\ref{eq:S1-S2-S}) implies that $P_{2}$ is not contained in $S_{1}$, and for
every $2$-dimensional subspace $W_1\subset S_{1}$ the subspace $W_1+P_{2}$ is
$3$-dimensional. Let $P_{1}$ and $Q_{1}$ be distinct $1$-dimensional
subspaces contained in $W_1$. It follows from (\ref{eq:W-S}) that
$P_{1}+P_{2}$ and $Q_{1}+P_{2}$ meet $S$ in $1$-dimensional subspaces ($\neq
P_{2}$) which will be denoted by $P$ and $Q$, respectively. As $P_{1}$ and
$Q_{1}$ are distinct, so are $P$ and $Q$. Therefore $P+Q$ is a
$2$-dimensional subspace of $S$. Since $W_1$ and $P+Q$ lie in the
$3$-dimensional subspace $W_1+P_{2}$, they have a common $1$-dimensional
subspace contained in $W_1\cap S = W_1\cap(S\cap S_{1})$. This proves
(\ref{eq:dim.Si-S}).

Finally, we read off from (\ref{eq:S1-S2-S}) that
\begin{equation}\label{eq:S1-S2}
      S_{1}\cap S_{2}=(S\cap S_{1})\cap (S\cap S_{2}),
\end{equation}
and we shall finish the proof by showing that this subspace has dimension
$k-1$. By (\ref{eq:dim.Si-S}) and because of $S_1\neq S_2$, the dimension of
$S_{1}\cap S_{2}$ is either $k-2$ or $k-1$. Suppose, to the contrary, that
\begin{equation}\label{eq:k-2}
      \dim S_{1}\cap S_{2}=k-2.
\end{equation}
Then $S\cap S_{1}$ and $S\cap S_{2}$ are distinct $(k-1)$-dimensional
subspaces spanning $S$.
There exist $1$-dimensional subspaces
$P_{1},P_{2}$ such that
\begin{equation}
     S_{i}=(S\cap S_{i})+P_{i}
\end{equation}
for $i=1,2$. We have $P_{1}\ne P_{2}$ (otherwise (\ref{eq:S1-S2-S}) would
give $P_{1}=P_{2}\subset S_{1}\cap S_{2}\subset S$ which is impossible), and
(\ref{eq:W-S}) guarantees that $(P_{1}+P_{2})\cap S$ is a $1$-dimensional
subspace. Then $S_{1}+S_{2}$ is contained in the $(k+1)$-dimensional subspace
$S+P_1$ which, by the dimension formula for subspaces, contradicts
(\ref{eq:k-2}).
\end{proof}

\begin{lemma}\label{lemma:1}
If $l:V\to V$ is a semilinear isomorphism such that $G_{j}(l)$ is the identity
for at least one $j\in\{1,2,\ldots,n-1\}$ then the same holds for all
$i=1,2\ldots, n-1$.
\end{lemma}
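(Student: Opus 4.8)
The plan is to reduce to the two extreme values of the dimension index and to exploit that a semilinear isomorphism induces an automorphism of the subspace lattice. Indeed, since $l$ is bijective and semilinear, the assignment $S\mapsto l(S)$ preserves inclusion, sends $i$-dimensional subspaces to $i$-dimensional subspaces, and commutes with forming sums and intersections of subspaces. I shall first deduce that $G_{1}(l)$ is the identity, and then propagate this property to every index $i$.

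For the first step, fix a $1$-dimensional subspace $P$ and consider all $j$-dimensional subspaces $S$ with $P\subseteq S$. The crucial point is that, for $1\leq j\leq n-1$, the intersection of all these $S$ equals $P$ itself: given any vector $v\notin P$, there is a hyperplane through $P$ avoiding $v$ (because the image of $v$ in $V/P$ is nonzero), and inside this hyperplane one finds a $j$-dimensional subspace containing $P$ but not $v$; here both $j\geq 1$ and $j\leq n-1$ are used to guarantee enough room. Since by hypothesis $l(S)=S$ for each such $S$, and $l$ preserves inclusion, we get $l(P)\subseteq l(S)=S$ for all of them, hence $l(P)\subseteq P$; as $\dim l(P)=1$, this forces $l(P)=P$. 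Thus $G_{1}(l)$ is the identity.

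For the second step, assume $G_{1}(l)$ is the identity and let $S\in\cG_{i}$ be arbitrary. Writing $S$ as the sum of the $1$-dimensional subspaces $P\subseteq S$ it contains, and using that the induced map commutes with sums, we obtain $l(S)=\sum_{P\subseteq S}l(P)=\sum_{P\subseteq S}P=S$, so $G_{i}(l)$ is the identity as well. Combining the two steps proves the claim for all $i\in\{1,\dots,n-1\}$.

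The only genuinely delicate point is the lattice identity in the first step---namely that a point is the intersection of the $j$-dimensional subspaces through it---which is exactly where the range $1\leq j\leq n-1$ enters; the remaining manipulations are the routine fact that a semilinear bijection acts as an automorphism of the subspace lattice of $V$.
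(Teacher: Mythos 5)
Your proof is correct. The paper itself gives no argument for this lemma---its proof is literally ``This is well known.''---so your write-up simply supplies the standard details that the authors omit: first recovering $G_1(l)=1$ by intersecting the $j$-dimensional subspaces through a point (which is exactly where $1\leq j\leq n-1$ is needed), then propagating to all $i$ by writing any subspace as the sum of its $1$-dimensional subspaces; both steps use only that a semilinear bijection acts as an automorphism of the subspace lattice, which is valid over an arbitrary division ring.
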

\begin{proof}
This is well known.
\end{proof}

\begin{lemma}\label{lemma:2}
Let $l_{i}:V\to V$ and $s_{i}:V\to V^{*}$ be semilinear isomorphisms, $i=1,2$.
Then the following assertions hold.
\begin{enumerate}

\item[(a)] If one of the mappings $G_{k}(l_{1})\times G_{n-k}(l_{2})$ or
$G_{k}(l_{1})\dtimes G_{k}(l_{2})$, when restricted to $\cG$, is a
{\C}transformation then $G_{i}(l_{1})=G_{i}(l_{2})$ for all $i=1,2,\dots, n-1$.

\item[(b)] If one of the mappings $D_{k}(s_{1})\dtimes D_{n-k}(s_{2})$ or
$D_{k}(s_{1})\times D_{k}(s_{2})$, when restricted to $\cG$, is a
{\C}transformation then $D_{i}(s_{1})=D_{i}(s_{2})$ for all $i=1,2,\dots, n-1$.

\item[(c)] If $n=2k>2$ then none of the mappings
  $G_{k}(l_{1})\times D_{k}(s_{2})$,
  $D_{k}(s_{1})\times G_{k}(l_{2})$,
  $G_{k}(l_{1})\dtimes D_{k}(s_{2})$,  and
  $D_{k}(s_{1})\dtimes G_{k}(l_{2})$
is a {\C}transformation, when it is restricted to $\cG$.
\end{enumerate}
\end{lemma}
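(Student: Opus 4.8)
The plan is to reduce all three parts to a single statement about complementary pairs, and then to exploit the following elementary \emph{complement lemma}: if two subspaces $U,U'\in\cG_m$ have the same complements in $\cG_{n-m}$, then $U=U'$. Indeed, if $U\neq U'$ choose $w\in U'\setminus U$; since $\langle w\rangle\cap U=0$ there is a complement $S\in\cG_{n-m}$ of $U$ with $w\in S$, and then $0\neq\langle w\rangle\subseteq S\cap U'$ shows that $S$ is not a complement of $U'$. The key preliminary observation is that each map in (a)--(c) is built from bijections of the two factors, either directly (for $\times$) or after interchanging the two coordinates (for $\dtimes$), and its inverse has the same shape. Because two elements of $\cG$ are close exactly when they agree in precisely one coordinate, any such map automatically sends close elements to close elements in both directions. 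Hence being a {\C}transformation is equivalent to the single requirement that the map restrict to a bijection of $\cG$, that is, that it carry complementary pairs to complementary pairs in both directions. This turns each hypothesis into a biconditional about complementarity.

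For (a) the biconditional reads $S+U=V\Leftrightarrow l_1(S)+l_2(U)=V$ for all $S\in\cG_k$, $U\in\cG_{n-k}$ (the $\dtimes$ variant, with $n=2k$, gives the same condition after the harmless coordinate swap). Setting $m:=l_1^{-1}l_2$ and applying the isomorphism $l_1^{-1}$ to the right-hand side turns this into $S+U=V\Leftrightarrow S+m(U)=V$. Fixing $U$ and letting $S$ vary, the complement lemma gives $m(U)=U$ for every $U\in\cG_{n-k}$, so $G_{n-k}(m)$ is the identity, and Lemma~\ref{lemma:1} yields $G_i(l_1)=G_i(l_2)$ for all $i$. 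Part (b) is the exact dual: using $(A\cap B)^{\circ}=A^{\circ}+B^{\circ}$ and $(A+B)^{\circ}=A^{\circ}\cap B^{\circ}$ one sees that the relevant maps restrict to $\cG$ iff $S+U=V\Leftrightarrow s_1(S)+s_2(U)=V^{*}$; with $m:=s_2^{-1}s_1:V\to V$ this becomes $S+U=V\Leftrightarrow m(S)+U=V$, and the complement lemma now forces $m(S)=S$ for all $S\in\cG_k$, whence $s_1(S)=s_2(S)$ and $D_i(s_1)=D_i(s_2)$ for all $i$.

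Part (c) is the heart of the matter. Each of the four mixed maps restricts to $\cG$ iff the complementarity of $(S,U)$ is equivalent to a complementarity between a collineation-image $G_k(\cdot)$ of one subspace and a duality-image $D_k(\cdot)$ of the other. Since the relation ``$+\;=V$'' is symmetric, the $\dtimes$ cases collapse to the $\times$ cases and, after possibly renaming the two factors, I may assume the condition is $S+U=V\Leftrightarrow \phi(S)+\psi(U)=V$ with $\phi=G_k(l_1)$ and $\psi=D_k(s_2)$. Applying $l_1^{-1}$ to the right-hand complementarity gives $S+U=V\Leftrightarrow S+\phi^{-1}\psi(U)=V$, and the complement lemma forces $\phi^{-1}\psi(U)=U$ for every $U$, i.e.\ $G_k(l_1)=D_k(s_2)$ as maps of $\cG_k$. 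This is impossible: a collineation and a duality act oppositely on the two families of maximal cliques. Indeed $G_k(l_1)$ sends the star $\{X\in\cG_k\mid A\subseteq X\}$, $A\in\cG_{k-1}$, to a star, whereas $D_k(s_2)$ sends it to the top $\{Y\in\cG_k\mid Y\subseteq B\}$ with $B=(s_2(A))^{\circ}\in\cG_{k+1}$; and for $n=2k>2$, hence $k\geq2$, a star and a top are distinct, because the intersection of all members of a star is the nonzero space $A$, while the intersection of all members of a top is $0$.

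The step I expect to require the most care is this last one. A tempting shortcut is to symmetrise the complementarity condition into an identity of the form $(X,Y)\in\cG\Leftrightarrow(\theta X,\theta^{-1}Y)\in\cG$ for an order-reversing $\theta$; but that symmetrised identity is strictly weaker and is in fact satisfied by a polarity, so it cannot produce the contradiction. One must therefore resist symmetrising and instead feed the genuine one-sided biconditional directly into the complement lemma, as above, in order to force the offending equality $G_k(l_1)=D_k(s_2)$ and thereby complete the argument.
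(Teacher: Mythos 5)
Your proof is correct, and for parts (a) and (b) it follows essentially the paper's route: compose with inverses to reduce to a single map, observe that for maps of product shape the {\C}property is equivalent to preserving complementarity in both directions, and then conclude from the fact that a subspace is determined by its set of complements (which the paper uses implicitly, and which you isolate and prove as your ``complement lemma''), followed by Lemma~\ref{lemma:1}. The genuine divergence is the final contradiction in part (c). Like the paper, you reduce to the impossible identity $G_k(l_1)=D_k(s_2)$ on $\cG_k$ (the paper phrases it as $D_k(s_2l_1^{-1})$ being the identity on $\cG_k$, which is the same statement), but you then rule it out structurally: a collineation maps every star $\{X\in\cG_k\mid A\subset X\}$, $A\in\cG_{k-1}$, onto a star, whereas a duality maps it onto a top $\{Y\in\cG_k\mid Y\subset B\}$ with $B=\big(s_2(A)\big)^\circ\in\cG_{k+1}$, and for $k\geq 2$ a star and a top are distinct since the intersections of their members are $A\neq 0$ and $0$, respectively. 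The paper instead takes an arbitrary $W\in\cG_{k-1}$, writes $W=\bigcap_{i=1}^{k+1}U_i$ with $\sum_{i=1}^{k+1}U_i=V$ and $U_i\in\cG_k$, and applies the annihilator identity to the fixed-point equation to obtain $0=\big(s_2l_1^{-1}(V)\big)^\circ=\bigcap_{i=1}^{k+1}U_i=W$, forcing $k=1$, a contradiction. Both mechanisms exploit that annihilators reverse inclusions; the paper's version is a two-line computation, while yours is more geometric (the two families of maximal adjacency-cliques of the Grassmann space) and makes visible why the obstruction appears exactly when $k\geq 2$. One cosmetic compression: in your part (b), passing from $s_1(S)=s_2(S)$ for all $S\in\cG_k$ to $D_i(s_1)=D_i(s_2)$ for all $i$ still needs Lemma~\ref{lemma:1} applied to $s_2^{-1}s_1$, exactly as in your part (a); this is immediate, but worth saying.
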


\begin{proof}
(a) Let the restriction of $G_{k}(l_{1})\times G_{n-k}(l_{2})$ to $\cG$ be a
{\C}transformation. Then $G_{k}(1_{V})\times G_{n-k}(l^{-1}_{1}l_{2})$ gives
also a {\C}transformation. This means that for each $U\in {\mathcal G}_{n-k}$
the mapping $G_{k}(1_{V})$ transfers the set of all $k$-dimensional subspaces
having a non-zero intersection with $U$ onto the set of all $k$-dimensional
subspaces having a non-zero intersection with $l^{-1}_{1}l_{2}(U)$. However,
$G_{k}(1_{V})$ is the identity. Thus
\begin{equation}
  l^{-1}_{1}l_{2}(U)=U,
\end{equation}
and $G_{n-k}(l_{2}l^{-1}_{1})$ is the identity. Hence we can apply
Lemma~\ref{lemma:1} to show the assertion in this particular case.

Next, let the restriction of $G_{k}(l_{1})\dtimes G_{k}(l_{2})$ to $\cG$ be a
{\C}transformation. Thus $n=2k$ and the assertion follows from the previous
case and
\begin{equation}
  G_{k}(l_{1})\dtimes G_{k}(l_{2}) = \big(G_{k}(1_V)\dtimes G_{k}(1_V)\big)
  \big(G_{k}(l_{1})\times G_{k}(l_{2})\big).
\end{equation}

(b) can be verified similarly to (a).

(c) Assume, contrary to our hypothesis, that $G_{k}(l_{1})\times D_{k}(s_{2})$
gives a {\C}transformation. Hence $G_{k}(1_V)\times D_{k}(s_{2}l_1^{-1})$ is
also a {\C}transformation and, as above, we infer that
\begin{equation}
   D_{k}(s_{2}l_1^{-1})(U)=\big((s_{2}l_1^{-1})(U)\big)^\circ = U
\end{equation}
for all $U\in\cG_{k}$. Let $W\in\cG_{k-1}$. Then there are subspaces
$U_1,U_2,\ldots U_{k+1}\in\cG_{k}$ such that  $V = \sum_{i=1}^{k+1}U_i$ and  $W
= \bigcap_{i=1}^{k+1}U_i$. Consequently,
\begin{equation}
   0=\big(s_{2}l_1^{-1}(V)\big)^\circ
    =\bigcap_{i=1}^{k+1}\big((s_{2}l_1^{-1})(U_i)\big)^\circ
    =\bigcap_{i=1}^{k+1} U_i
    = W
\end{equation}
which implies $k=1$, an absurdity.

The remaining cases can be shown in the same way.
\end{proof}
Let us remark that in general the assumption $n>2$ in part (c) of this lemma
cannot be dropped. Indeed, if $n=2k=2$ and if $K$ is a commutative field then
there exists a non-degenerate alternating bilinear form $b:V\times V\to K$.
Hence $s:V\to V^*: v\mapsto b(v,\cdot)$ is a linear bijection, and
$G_1(1_V)\times D_1(s)$ is the identity on $\cG_1\times\cG_1$.

\begin{lemma}\label{lemma:n=2}
Let $n=2$, whence $k=1$. Suppose that $g':\cG_1\to \cG_1$ and
$g'':\cG_1\to\cG_1$ are bijections such that one of the mappings $g'\times g''$
or $g'\dtimes g''$, when restricted to $\cG$, is a {\C}transformation. Then
$g'=g''$.
\end{lemma}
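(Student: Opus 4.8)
The plan is to work out explicitly what the closeness-preserving condition forces when $n=2$ and $k=1$. Here $\cG_1$ is the set of $1$-dimensional subspaces, i.e.\ the points of the projective line, and $\cG$ consists of all pairs $(S,U)$ of distinct points (since complementary $1$-spaces in a $2$-space are exactly distinct $1$-spaces). First I would unwind the definition of closeness in this special case: two pairs $(S,U),(S',U')\in\cG$ are close precisely when they agree in exactly one coordinate. So a $\C$transformation is a bijection of $\cG$ that preserves, in both directions, the relation ``share exactly one coordinate.''

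The core combinatorial step is to pin down the two ``parallel classes'' of lines in $\cG$. For a fixed $S$, the set of all $(S,U)\in\cG$ (with $U\neq S$) forms a maximal family of mutually close pairs; similarly for fixed $U$. I would show that these horizontal and vertical families are exactly the maximal cliques of the closeness graph (given that $\dim\cG_1\geq 2$, i.e.\ the projective line has at least three points when the division ring is nontrivial, which holds automatically). A $\C$transformation permutes these maximal cliques, and the key point is that it either preserves the horizontal/vertical split or swaps it---this dichotomy is precisely the distinction between the $g'\times g''$ form and the $g'\dtimes g''$ form.

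Once the clique structure is controlled, the conclusion $g'=g''$ should follow by a counting/consistency argument on how the two coordinate bijections interact along shared cliques. Concretely, for the $g'\times g''$ case, each horizontal clique $\{S\}\times(\cG_1\setminus\{S\})$ is sent to $\{g'(S)\}\times(\cG_1\setminus\{g'(S)\})$, and each vertical clique $(\cG_1\setminus\{U\})\times\{U\}$ is sent to $(\cG_1\setminus\{g''(U)\})\times\{g''(U)\}$; but a single pair $(S,U)$ lies in one horizontal and one vertical clique, and its image $(g'(S),g''(U))$ must respect the constraint that the pair stays in $\cG$ (the two coordinates distinct). I expect that tracking a pair $(S,U)$ through both its cliques and demanding the images remain valid elements of $\cG$ forces the two bijections to coincide. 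The $g'\dtimes g''$ case is handled symmetrically after the coordinate swap.

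The main obstacle I anticipate is the bookkeeping around the constraint ``$S\neq U$'' that defines $\cG$ inside $\cG_1\times\cG_1$. Because $\cG$ is the \emph{off-diagonal} part of the product, the horizontal and vertical cliques are ``punctured'' (each is missing its diagonal point), and I must verify that $g'$ and $g''$ cannot independently permute the points in a way that keeps every off-diagonal pair off-diagonal \emph{unless} $g'=g''$. I would argue this by showing that the image of the deleted diagonal point is consistently determined: the unique point excluded from the image of the horizontal clique at $S$ is $g'(S)$, and matching this against the vertical-clique data forces $g''=g'$ on every point. This diagonal-tracking argument is the delicate part, but it is purely combinatorial and should be short.
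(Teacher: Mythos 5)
Your proposal is correct and is in essence the paper's own argument: the decisive step in both is that mere preservation of $\cG$ (the off-diagonal constraint $g'(S)\neq g''(U)$ whenever $S\neq U$) already pins down $g''$ against $g'$, with the $\dtimes$ case reduced to the $\times$ case via the coordinate swap $1_\cG\dtimes 1_\cG$. The paper compresses this into ``proceed as in Lemma~\ref{lemma:2}~(a)'' after normalizing by $(g')^{-1}$, whereas your maximal-clique bookkeeping, though sound, is redundant here since the product form $g'\times g''$ is already part of the hypothesis.
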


\begin{proof}
It suffices to discuss the first case, since $1_\cG\dtimes 1_\cG$ yields a
{\C}transformation. Now we can proceed as in the proof of Lemma~\ref{lemma:2}
(a) in order to establish that the restriction of $g'{}^{-1}g''$ to $\cG$
equals $1_\cG$.
\end{proof}

We say that ${\mathcal X}\subset{\mathcal G}$ is a \emph{{\C}subset}  if any
two distinct elements of ${\mathcal X}$ are close. (If we consider the graph of
the closeness relation on $\mathcal G$ then a {\C}subset is just a clique, i.e.
a complete subgraph.) A {\C}subset is said to be \emph{maximal\/} if it is not
properly contained in any {\C}subset. In order to describe the maximal
{\C}subsets the following notation will be useful. If $P$ and $T$ are subspaces
of $V$ then we put
\begin{equation}\label{}
    \cG(P,T):=\{(S,U)\in\cG\mid S\inz P \mbox{ and } U\inz T\};
\end{equation}
here we use the incidence relation from the beginning of
Section~\ref{sect:trafos}.

\begin{lemma}\label{lemma:3}
The maximal {\C}subsets of ${\mathcal G}$ are precisely the sets $\cG(S,V)$
with $S\in\cG_{k}$, and $\cG(V,U)$ with $U\in\cG_{n-k}$.
\end{lemma}

\begin{proof}
Easy verification.
\end{proof}
We refer to the sets described in the lemma as maximal {\C}subsets of
\emph{first kind} and \emph{second kind}, respectively.
\begin{myproof}{\ref{thm:C}}
(a) Let $f$ be a {\C}transformation of ${\mathcal G}$. Then $f$ and $f^{-1}$
map maximal {\C}subsets to maximal {\C}subsets. Observe that two maximal
{\C}subsets have a unique common element if, and only if, one of them is of
first kind, say $\cG(S,V)$, the other is of second kind, say $\cG(V,U)$, and
$(S,U)\in\cG$.

Given $S,S'\in\cG_{k}$ there exists a subspace $U\in\cG_{n-k}$ such that
$S+U=S'+U=V$. We conclude from
\begin{equation}
  f\big(\cG(S,V)\big)\cap f\big(\cG(V,U)\big)=\{f\big((S,U)\big)\}
\end{equation}
that $f\big(\cG(S,V)\big)$ and $f\big(\cG(V,U)\big)$ are maximal {\C}subsets of
different kind. Likewise, $f\big(\cG(S',V)\big)$ and $f\big(\cG(V,U)\big)$ are
of different kind, so that $f\big(\cG(S,V)\big)$ and $f\big(\cG(S',V)\big)$ are
of the same kind.

A similar argument holds for maximal {\C}subsets of second kind; altogether the
action of the {\C}transformation $f$ on the set of maximal {\C}subsets is
either \emph{type preserving\/} or \emph{type interchanging\/}.

(b) Suppose that $f$ is type preserving. Then there exist bijections
\begin{eqnarray*}\label{}
    &g':\cG_{k} \to \cG_{k} \mbox{ such that } f\big(\cG(S,V)\big) = \cG\big(g'(S),V\big)
    \mbox{ for all }S\in\cG_{k},&\\
    &g'':\cG_{n-k} \to \cG_{n-k} \mbox{ such that }
    f\big(\cG(V,U)\big) = \cG\big(V,g''(U)\big)\mbox{ for all }U\in\cG_{n-k};&
\end{eqnarray*}
thus $f$ equals the restriction of $g'\times g''$ to $\cG$. We distinguish four
cases:

Case 1: $n=2$. Hence $k=1$; we deduce from Lemma~\ref{lemma:n=2} (a) that
$g'=g''$, whence $f$ is given as in Example~\ref{exmp:n=2}.

Case 2: $n>2$ and $k=1$.  Then for each $U\in {\mathcal G}_{n-1}$ the mapping
$g'$ transfers the set of all $1$-dimensional subspaces contained in $U$ to the
set of all $1$-dimensional subspaces contained in $g''(U)$.  This means, by the
fundamental theorem of projective geometry, that there exists a semilinear
isomorphism $l':V\to V$ with $g'=G_1(l')$. Similarly, $g''$ is induced by a
semilinear isomorphism $l'':V\to V$.

Case 3: $n>2$ and $k=n-1$. By symmetry, this coincides with the previous case.

Case 4: $n>2$ and $1<k<n-1$. Then Theorem~\ref{thm:3} guarantees that $g'$ and
$g''$ are adjacency preserving in both directions; Chow's theorem
(\cite[p.~38]{Chow}, \cite[p.~81]{dieu-71}) says that $g'$ and $g''$ are
induced by semilinear isomorphisms. More precisely, we have $g'=G_{k}(l')$ with
a semilinear bijection $l':V\to V$, or $g'=D_{k}(s')$ with a semilinear
bijection $s':V\to V^*$ (only when $n=2k$). A similar description holds for
$g''$.

In cases 2--4 we infer from Lemma~\ref{lemma:2} (c) that there are only two
possibilities:

Case A. $g'=G_{k}(l')$ and $g''=G_{n-k}(l'')$. Now Lemma~\ref{lemma:2} (a)
yields that $G_i(l')=G_i(l'')$ for all $i=1,2\ldots,n-1$, whence $f$ is the
restriction to $\cG$ of $G_{k}(l')\times G_{n-k}(l')$; cf.\
Example~\ref{exmp:1}.

Case B. $n=2k$, $g'=D_{k}(s')$, and $g''=D_{k}(s'')$. Now Lemma~\ref{lemma:2}
(b) yields that $D_i(s')=D_i(s'')$ for all $i=1,2\ldots,n-1$, whence $f$ is the
restriction to $\cG$ of $D_{k}(s')\times D_{k}(s')$; cf.\ Example~\ref{exmp:3}.

(c) If $f$ is type interchanging then there exist bijections
\begin{eqnarray*}\label{}
    &g':\cG_{k} \to \cG_{n-k} \mbox{ such that } f\big(\cG(S,V)\big) = \cG\big(V,g'(S)\big)
    \mbox{ for all }S\in\cG_{k},&\\
    &g'':\cG_{n-k} \to \cG_{k} \mbox{ such that }
    f\big(\cG(V,U)\big) = \cG(g''(U),V)\mbox{ for all }U\in\cG_{n-k};&
\end{eqnarray*}
thus $f$ is the restriction to $\cG$ of $g'\dtimes g''$. Now we can proceed,
mutatis mutandis, as in (b). So $f$ is given as in Example~\ref{exmp:n=2},
\ref{exmp:2}, or \ref{exmp:3}.

This completes the proof.
\end{myproof}

\section{Proof of Theorem~\ref{thm:A}}\label{sect:thm:A}

First, let us introduce the following notion: We say that ${\mathcal
X}\subset{\mathcal G}$ is an \emph{{\A}subset}  if any two distinct elements of
${\mathcal X}$ are adjacent. (As before, such a set is just a clique of the
graph given by the adjacency relation on $\mathcal G$.) An {\A}subset is said
to be \emph{maximal} if it is not properly contained in any {\A}subset.

If $k=1$ or if $k=n-1$ then an {\A}subset is the same as a {\C}subset, and
Lemma~\ref{lemma:3} can be applied.

\begin{lemma}\label{lemma:5}
Let $1< k < n-1$. Then the maximal {\A}subsets of ${\mathcal G}$ are precisely
the following sets:
 {\arraycolsep0pt\begin{eqnarray}\label{eq:typeA1}
  \cG(S,T)
  \mbox{ with }&S\in\cG_{k}\mbox{, }T\in\cG_{n-k+1}\mbox{, and }&S+T =V.\\
  \label{eq:typeA2}
  \cG(S,T)
  \mbox{ with }&S\in\cG_{k}\mbox{, }T\in\cG_{n-k-1}\mbox{, and }&S\cap T =0.\\
    \label{eq:typeA3}
  \cG(T,U)
  \mbox{ with }&T\in\cG_{k+1}\mbox{, }U\in\cG_{n-k}\mbox{, and }&T+U =V.\\
  \label{eq:typeA4}
  \cG(T,U)
  \mbox{ with }&T\in\cG_{k-1}\mbox{, }U\in\cG_{n-k}\mbox{, and }&T\cap U =0.
\end{eqnarray}}
\end{lemma}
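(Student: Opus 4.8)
The plan is to first pin down the combinatorial shape of an arbitrary {\A}subset and then translate the problem into one about cliques in a single Grassmann graph. The crucial observation is a ``projection'' property: \emph{if an {\A}subset $\cX$ contains two elements with distinct first coordinates, then all of its elements share a common second coordinate} (and symmetrically). Indeed, two adjacent elements of $\cG$ agree in exactly one coordinate; so if $(S_1,U),(S_2,U)\in\cX$ with $S_1\ne S_2$, then any further $(S_3,U_3)\in\cX$ with $U_3\ne U$ would have to agree with both in the first coordinate, forcing $S_3=S_1=S_2$, a contradiction. Since for $1<k<n-1$ no singleton is a maximal {\A}subset (given $(S,U)$ one may perturb $U$ to an adjacent complement of $S$, using that $\dim U=n-k\ge 2$ and that $V$ is not the union of the two proper subspaces one must avoid), every maximal {\A}subset has at least two elements and hence, by this observation, either a constant first coordinate $S$ or a constant second coordinate $U$. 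By the symmetry interchanging the roles of $k$ and $n-k$ (and of $S$ and $U$), it suffices to treat the first case.

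So suppose $\cX=\{S\}\times\mathcal{U}$ with $\mathcal{U}\subseteq\cG_{n-k}$ a set of complements of $S$; here $(S,U)\sim(S,U')$ holds precisely when $U\sim U'$ in $\cG_{n-k}$, so $\mathcal{U}$ is a clique of the Grassmann graph on $\cG_{n-k}$ consisting of complements of $S$. I would now invoke the well-known fact that every such clique is contained in a \emph{star} $\{X\in\cG_{n-k}\mid W\subset X\}$ with $W\in\cG_{n-k-1}$, or in a \emph{top} $\{X\in\cG_{n-k}\mid X\subset T\}$ with $T\in\cG_{n-k+1}$. Intersecting a top with the complements of $S$ gives the hyperplanes of $T$ that avoid the subspace $S\cap T$; a dimension count shows this is nonempty exactly when $\dim(S\cap T)=1$, i.e.\ when $S+T=V$, and it is itself a clique, namely the set $\cG(S,T)$ of the form (\ref{eq:typeA1}). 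Likewise, intersecting a star with the complements of $S$ forces $W\cap S=0$ and yields $\cG(S,W)$ of the form (\ref{eq:typeA2}). Thus every clique of complements of $S$ is contained in a set of type (\ref{eq:typeA1}) or (\ref{eq:typeA2}); the symmetric case of a constant second coordinate produces (\ref{eq:typeA3}) and (\ref{eq:typeA4}).

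It remains to settle maximality in both directions. For the easy direction one checks directly that the four displayed sets are {\A}subsets. For the reverse direction, the projection property shows that any {\A}subset containing $\{S\}\times\mathcal{U}$ (with $|\mathcal{U}|\ge2$) again has constant first coordinate $S$; hence $\cX$ is a maximal {\A}subset exactly when $\mathcal{U}$ is maximal among cliques of complements of $S$, and by the previous paragraph this happens precisely for the sets (\ref{eq:typeA1})--(\ref{eq:typeA4}). I expect the genuine work to lie in verifying that these sets cannot be enlarged \emph{within the complements}, e.g.\ that a complement $U^{*}$ of $S$ adjacent to every hyperplane of $T$ avoiding $S\cap T$ must itself lie in $T$ (whence $U^{*}$ already belongs to the set). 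This reduces to the linear-algebra statement that the only subspace contained in \emph{all} hyperplanes of $T$ missing a fixed point is $0$, and dually that the admissible members of a star cannot all be squeezed into one $(n-k+1)$-dimensional subspace; both arguments are where one uses the hypotheses $n-k\ge 2$ and $k\ge 2$, i.e.\ $1<k<n-1$. Assembling these pieces yields the asserted list.
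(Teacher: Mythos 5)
Your proposal is correct and takes essentially the same route as the paper: your ``projection property'' is just the paper's observation that every {\A}subset is a {\C}subset and therefore lies in some $\cG(S,V)$ or $\cG(V,U)$ (Lemma~\ref{lemma:3}), after which both arguments invoke Chow's classification of maximal cliques in a Grassmann graph (stars and tops) and intersect these with the set of complements of the fixed subspace. The closing maximality verification, which you correctly reduce to two elementary linear-algebra facts (the hyperplanes of $T$ avoiding a fixed point of $T$ intersect in $0$, and the members of a star through $W$ that avoid $S$ span all of $V$), is precisely the step the paper dismisses as ``obvious'', so your sketch there is at least as detailed as the paper's own treatment.
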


\begin{proof}
From \cite[p.~36]{Chow} we recall the following: Let ${\mathcal Y}\subset
{\mathcal G}_{i}$, $1<i<n-1$, be a maximal set of mutually adjacent
$i$-dimensional subspaces of $V$. Then there exists a subspace $T\in {\mathcal
G}_{i\pm 1}$ such that ${\mathcal Y}=\{Y\in\cG_i\mid Y\inz T\}$.

Suppose now that $\cX\subset\cG$ is a maximal {\A}subset. Clearly, there exists
an element $(S,U)\in\cX$. Since $\cX$ is also a {\C}subset, we obtain that
$\cX\subset\cG(S,V)$ or that $\cX\subset\cG(V,U)$.

Let $\cX\subset\cG(S,V)$. Then the second components of the elements of $\cX$
are mutually adjacent elements of $\cG_{n-k}$. Hence, by the above, they all
are incident with a subset $T\in\cG_{n-k\pm 1}$.  So, due to its maximality,
the set $\cX$ is given as in (\ref{eq:typeA1}) or (\ref{eq:typeA2}).

Similarly, if $\cX\subset\cG(V,U)$ then $\cX$ can be written as in
(\ref{eq:typeA3}) or (\ref{eq:typeA4}).

Conversely, it is obvious that (\ref{eq:typeA1})--(\ref{eq:typeA4}) define
maximal {\A}subsets.
\end{proof}

We shall also make use of the following result:

\begin{lemma}\label{lemma:connect}
Any two elements $(S,U)$ and $(S',U')$ of ${\mathcal G}$ can be connected by a
finite sequence which is given as in formula {\rm(\ref{eq:A.connect})}. In
particular, if $S=S'$ {\rm(}or $U=U'${\rm)} then this sequence can be chosen in
such a way that $S=S_0=S_1=\cdots=S_i$ {\rm(}or $U=U_0=U_1=\cdots=U_i${\rm)}.
\end{lemma}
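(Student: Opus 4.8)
The plan is to first establish a special case and then bootstrap to the general statement. The heart of the matter is the following \emph{building block}: for a fixed $S\in\cG_k$, any two complements $U,U'\in\cG_{n-k}$ of $S$ can be joined by a chain $U=U_0\sim U_1\sim\cdots\sim U_m=U'$ in which every $U_j$ is again a complement of $S$. The corresponding pairs then form a chain in $\cG$ with constant first component, which is exactly the ``in particular'' assertion (the case $U=U'$ is symmetric, fixing the second component and moving the first among the complements of $U$). Granting this, I would handle two arbitrary elements $(S,U),(S',U')\in\cG$ by inserting a \emph{common complement}: I claim there is a $W\in\cG_{n-k}$ with $S+W=S'+W=V$. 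Using it, I connect $(S,U)\to(S,W)$ (both $U$ and $W$ are complements of $S$), then $(S,W)\to(S',W)$ (both $S$ and $S'$ are complements of $W$, so the building block applies with the two factors interchanged), and finally $(S',W)\to(S',U')$ (both $W$ and $U'$ are complements of $S'$). Concatenating the three chains proves (\ref{eq:A.connect}).

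For the existence of a common complement I would take a subspace $C$ that is maximal among those with $C\cap S=C\cap S'=0$, and show $\dim C=n-k$. If this failed, then $S+C\neq V$ and $S'+C\neq V$; since a vector space over a division ring is never the union of two proper subspaces (if $x\notin V_2$ and $y\notin V_1$ then $x+y$ lies in neither), there is a vector $v\notin (S+C)\cup(S'+C)$. A short computation shows that $(C+\langle v\rangle)\cap S=(C+\langle v\rangle)\cap S'=0$, contradicting maximality. Hence $S+C=V$, so $\dim C=n-k$ because $S\cap C=0$, and then $\dim(S'+C)=k+(n-k)=n$ forces $S'+C=V$ as well; here the equality $\dim S=\dim S'=k$ is what makes the dimensions match up.

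To prove the building block I would parametrise the complements of $S$ by linear maps. Fixing one complement $U_0$, so that $V=S\oplus U_0$, every complement of $S$ is the graph $U_\phi:=\{u+\phi(u)\mid u\in U_0\}$ of a unique left-linear map $\phi\colon U_0\to S$, and a direct calculation gives $\dim(U_\phi\cap U_\psi)=\dim\ker(\phi-\psi)=(n-k)-\operatorname{rank}(\phi-\psi)$. Thus $U_\phi\sim U_\psi$ holds precisely when $\operatorname{rank}(\phi-\psi)=1$. Writing $\psi-\phi$ as a sum $\delta_1+\cdots+\delta_r$ of $r=\operatorname{rank}(\psi-\phi)$ maps of rank one and setting $\phi_j:=\phi+\delta_1+\cdots+\delta_j$ yields a chain $U_\phi=U_{\phi_0}\sim U_{\phi_1}\sim\cdots\sim U_{\phi_r}=U_\psi$ of complements of $S$, as required.

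The step I expect to require the most care is working over a \emph{noncommutative} division ring: I must check that the graph parametrisation respects left scalar multiplication, that the rank of a homomorphism of finite-dimensional left vector spaces is well defined and that every such map decomposes into rank-one summands, and that the union-of-two-subspaces argument is valid even over small rings such as $\mathbb{F}_2$. All of these facts are standard, but they are precisely the places where a naive field-theoretic argument could slip.
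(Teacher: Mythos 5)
Your proof is correct, and its overall skeleton is the same as the paper's: first settle the case of a fixed component, then reduce the general case by inserting a common complement $W$ of $S$ and $S'$ and concatenating $(S,U)\to(S,W)\to(S',W)\to(S',U')$ (the paper does exactly this with a complement it calls $U''$). Where you genuinely diverge is in the proof of the building block. The paper argues by induction on $d=(n-k)-\dim(U\cap U')$: it chooses $W\in\cG_{n-k-1}$ with $U\cap U'\subset W\subset U$, notes that the hyperplane $H=W\oplus S$ cannot contain $U'$, picks a one-dimensional $P'\subset U'$ off $H$, and replaces $U$ by $U'':=P'\oplus W$, which is still a complement of $S$, is adjacent to $U$, and satisfies $(n-k)-\dim(U''\cap U')=d-1$. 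You instead parametrise the complements of a fixed $S$ as graphs $U_\phi$ of linear maps $\phi:U_0\to S$, observe that $\dim(U_\phi\cap U_\psi)=\dim\ker(\phi-\psi)$, so that adjacency means $\operatorname{rank}(\phi-\psi)=1$, and then split $\psi-\phi$ into rank-one summands. Both arguments survive noncommutativity (graphs, rank--nullity, and rank-one decompositions of homomorphisms of left vector spaces all work over a division ring), and both produce chains of the same, in fact minimal, length $(n-k)-\dim(U\cap U')$. Your version buys the explicit ``space of matrices'' picture of the set of complements of $S$, which makes the distance formula transparent; the paper's version buys brevity and stays coordinate-free. One further point in your favour: you actually prove the existence of a common complement (maximal $C$ with $C\cap S=C\cap S'=0$, plus the fact that a vector space over a division ring is not the union of two proper subspaces), whereas the paper merely asserts that such a subspace $U''$ exists.
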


\begin{proof}
(a) First, we show the particular case when $(S,U),(S,U')\in\cG(S,V)$ with
$S\in\cG_k$. We proceed by induction on $d:=(n-k)-\dim(U\cap U')$, the case
$d=0$ being trivial.

Let $d>0$. There exists an $(n-k-1)$-dimensional subspace $W$ such that $U\cap
U'\subset W\subset U$. So $H:=W\oplus S$ is a hyperplane of $V$. It cannot
contain $U'$ because of $(S,U')\in\cG$. Thus $W' := H\cap U'$ has dimension
$n-k-1$, and there exists a $1$-dimensional subspace $P'\subset U'$ with
$U'=P'\oplus W'$. Consequently, $P'\not\subset H$ and we obtain
\begin{equation}\label{}
    V = P'\oplus H = P'\oplus W\oplus S.
\end{equation}
This means that $U'':=P'\oplus W$ is a complement of $S$. We have $(S,U)\sim
(S,U'')$ and $(n-k)-\dim(U''\cap U')=d-1$. So the assertion follows from the
induction hypothesis, applied to $(S,U'')$ and $(S,U')$.

Similarly, any two elements of ${\mathcal G}(V,U)$ with $U\in {\mathcal
G}_{n-k}$ can be connected.

(b) Now we consider the general case. Let $(S,U)$ and $(S',U')$ be elements of
${\mathcal G}$. There exists $U''\in {\mathcal G}_{n-k}$ which is complementary
to both $S$ and $S'$. Then, by (a), there exists a sequence
\begin{equation}\label{}
      (S,U)\sim\cdots\sim (S,U'')\sim\cdots\sim(S',U'')\sim\cdots\sim(S',U')
\end{equation}
which completes the proof.
\end{proof}

The statement in (a) from the above is just a particular case of a more general
result on the connectedness of a \emph{spine space}; cf.\
\cite[Proposition~2.9]{praz+z-02}.

\begin{myproof}{\ref{thm:A}} (a) We shall accomplish our task by showing that every {\A}transformation is a
{\C}transformation. As has been noticed in Section~\ref{sect:trafos}, this is
trivial if $k=1$ or if $k=n-1$. So let $f$ be an {\A}transformation of
${\mathcal G}$ and assume that $1<k<n-1$.

(b) We claim that
\begin{equation}\label{eq:Cinvariant}
    f\big(\cG(S,V)) \mbox{ is a maximal {\C}subset for all }S\in{\mathcal G}_{k}.
\end{equation}

Let us take $T\in {\mathcal G}_{n-k+1}$ such that ${\mathcal G}(S,T)$ is a
maximal {\A}subset. Then $f\big({\mathcal G}(S,T)\big)$ is also a maximal
{\A}subset. According to Lemma~\ref{lemma:5} there are four possible cases.

Case 1: $f\big({\mathcal G}(S,T)\big)$ is given according to (\ref{eq:typeA1}).
This means $f\big({\mathcal G}(S,T)\big)={\mathcal G}(W,Z)$ with $W\in\cG_{k}$,
$Z\in\cG_{n-k+1}$, and $W+Z=V$. We assert that in this case
\begin{equation}\label{eq:Ctrafo}
         f\big((S,U')\big)\in\cG(W,V) \mbox{ for all } (S,U')\in\cG(S,V).
\end{equation}
In order to show this we choose an element $(S,U)\in\cG(S,T)$. Clearly,
$f\big((S,U)\big)\in\cG(W,Z)\subset\cG(W,V)$.

First, we suppose that $(S,U)$ and $(S,U')$ are adjacent. Then $P:=U\cap
U'\in\cG_{n-k-1}$. We consider the \emph{pencil\/} given by $P$ and $T$, i.e.
the set
\begin{equation}
  \{X\in\cG_{n-k}\mid P\subset X\subset T\}.
\end{equation}
It contains at least three elements; precisely one them is not complementary to
$S$. Consequently, the intersection of the maximal {\A}subsets ${\mathcal
G}(S,T)$ and ${\mathcal G}(S,P)$ contains more than one element. The same
property holds for the intersection of the maximal {\A}subsets $f\big({\mathcal
G}(S,T)\big)=\cG(W,Z)$ and $f\big({\mathcal G}(S,P)\big)$. But this means that
$W$ is the first component of every element of $f\big({\mathcal G}(S,P)\big)$
so that $ f\big((S,U')\big)\in\cG(W,V)$.

Next, we suppose that $(S,U)$ and $(S,U')$ are arbitrary. By
Lemma~\ref{lemma:connect}, $(S,U)$ and $(S,U')$ can be connected by a finite
sequence
\begin{equation} (S,U)=(S,U_{0})\sim (S,U_{1})\sim\dots\sim (S,U_{i})=(S,U'),
\end{equation}
and the arguments considered above yield that (\ref{eq:Ctrafo}) holds.

Since $f^{-1}$ is adjacency preserving, we can repeat our previous proof, with
$\cG(W,Z)$ taking over the role of $\cG(S,T)$. Altogether, this proves
\begin{equation}\label{eq:}
  f\big({\mathcal G}(S,V)\big)=\cG(W,V).
\end{equation}

The remaining cases, i.e., when $f\big({\mathcal G}(S,T)\big)$ is given
according to (\ref{eq:typeA2}), (\ref{eq:typeA3}), or (\ref{eq:typeA4}), can be
treated similarly, whence (\ref{eq:Cinvariant}) holds true.

(c) Dual to (b), it can be shown that $f\big(\cG(V,U)\big)$ is a maximal
{\C}subset for all $U\in\cG_{n-k}$. Thus $f$ is a {\C}transformation.
\end{myproof}

\section{Proofs of Theorem~\ref{thm:CGxG} and Theorem~\ref{thm:AGxG}}
\label{sect:thm:GxG}

In the following proof we use the term \emph{maximal {\C}subset} just like in
Section~\ref{sect:thm:C}.

\begin{myproof}{\ref{thm:CGxG}}
Obviously, each maximal {\C}subset of $\cG_{k}\times\cG_{n-k}$ has either the
form $\{S\}\times\cG_{n-k}$ with $S\in\cG_k$ (\emph{first kind}) or
$\cG_k\times\{U\}$ with $U\in\cG_{n-k}$ (\emph{second kind}). Distinct maximal
{\C}subsets of the same kind have empty intersection, whereas maximal
{\C}subsets of different kind have a unique common element. So every
{\C}transformation is either type preserving, whence it can be written as
$f'\times f''$, or type interchanging, whence it can be written as $g'\dtimes
g''$.
\end{myproof}

Let $1<k<n-1$. We shall consider below the following well known \emph{partial
linear spaces}: For each $i=2,3,\ldots,n-2$ the set $\cG_i$ is the point set of
the \emph{Grassmann space\/} $(\cG_i,\cL_i)$; the elements of its line set
$\cL_i$ are the pencils
\begin{equation}
   \cG_i[P,T]:=\{X\in\cG_i\mid P\subset X\subset T\},
\end{equation}
where $P\in\cG_{i-1}$, $T\in\cG_{i+1}$, and $P\subset T$. The \emph{Segre
product\/} (or \emph{product space\/}) of $(\cG_{k},\cL_{k})$ and
$(\cG_{n-k},\cL_{n-k})$ is the partial linear space with point set
\begin{equation}\label{}
\cP:=\cG_{k}\times\cG_{n-k}
\end{equation}
and line set
\begin{equation}\label{}
    \cL:=\big\{\{S\}\times l \mid S\in\cG_{k},\, l\in\cL_{n-k} \big\}
    \cup
         \big\{m\times \{U\} \mid m\in\cL_{k},\, U\in\cG_{n-k} \big\}.
\end{equation}
See \cite{NP} for further details and references.

\begin{myproof}{\ref{thm:AGxG}}

(a) If $k=1$ or if $k=n-1$ then the assertion follows from
Theorem~\ref{thm:CGxG}.

(b) Let $1<k<n-1$. Given a subset $\cM\subset \cP$ we put
\begin{equation}\label{}
    \cM^\perp:=\{(S,U)\in\cP \mid (S,U)\perp (X,Y) \mbox{ for
    all }(X,Y)\in\cM\},
\end{equation}
where the sign ``$\perp$'' on the right hand side means ``adjacent or equal''.
Now let $(S,U)$ and $(S,U')$ be adjacent elements of $\cP $. Then
\begin{equation}
    \{(S,U),(S,U')\}^\perp =
    \{(S,Y)\in\cP \mid  U\cap U'\subset Y
    \mbox{ or } Y\subset U+U' \}
\end{equation}
and
\begin{equation}\label{eq:line1}
    \{(S,U),(S,U')\}^\perp{}^\perp =
    \{(S,Y)\in\cP \mid  U\cap U'\subset Y\subset U+U' \}.
\end{equation}
Similarly, if $(S,U)$ and $(S',U)$ are adjacent elements of $\cP $ then
\begin{equation}\label{eq:line2}
    \{(S,U),(S',U)\}^\perp{}^\perp =
    \{(X,U)\in \cP  \mid S\cap S'\subset X\subset S+S' \}.
\end{equation}

Next, suppose that $g:\cP\to\cP$ is an {\A}transformation. Every line of
$(\cP,\cL)$ can be written in the form (\ref{eq:line1}) or (\ref{eq:line2}),
since it contains at least two distinct collinear points or, said differently,
two adjacent elements of $\cP$. Thus $g$ is a collineation of the product space
$(\cP,\cL)$. By \cite[Theorem~1.14]{NP}, there are two possibilities:

Case 1. There exist collineations of Grassmann spaces $f':\cG_{k}\to\cG_{k}$
and $f'':\cG_{n-k}\to\cG_{n-k}$ such that $g=f'\times f''$. Clearly, $f'$ and
$f''$ are adjacency preserving in both directions.

Case 2. There exist collineations of Grassmann spaces $g':\cG_{k}\to\cG_{n-k}$
and $g'':\cG_{n-k}\to\cG_{k}$ such that $g=g'\dtimes g''$. As above, $g'$ and
$g''$ are adjacency preserving in both directions.

So $g$ is given as in Example~\ref{exmp:AGxG}.
\end{myproof}

\tiny

\ \\

Hans Havlicek\\
INSTITUT F\"UR DISKRETE MATHEMATIK UND GEOMETRIE\\
TECHNISCHE UNIVERSIT\"AT WIEN\\
WIEDNER HAUPTSTRASSE 8--10\\
A-1040 WIEN, AUSTRIA\\
havlicek@geometrie.tuwien.ac.at\\

\ \\

Mark Pankov\\
INSTITUTE OF MATHEMATICS\\
NATIONAL ACADEMY OF SCIENCE OF UKRAINE\\
TERESHCHENKIVSKA 3\\
01601 KIEV, UKRAINE\\
pankov@imath.kiev.ua

\end{document}